\newcommand{\mm}{\mathfrak m}
\newcommand{\pp}{\mathfrak p}
\newcommand{\qq}{\mathfrak q}
\newcommand{\ka}{\mathfrak a}
\newcommand{\kb}{\mathfrak b}
\newcommand{\N}{\mathbb{N}}
\newcommand{\Pc}{\mathcal{P}}
\newcommand{\ab}{{\bf a}}
\newcommand{\bb}{{\bf b}}
\newcommand{\xb}{{\bf x}}
\DeclareMathOperator{\height}{ht}
\newcommand{\nil}{\operatorname{Nil}}
\newcommand{\Hom}{\operatorname{Hom}}
\newcommand{\spec}{\operatorname{Spec}}
\newcommand{\Supp}{\operatorname{Supp}}
\newcommand{\depth}{\operatorname{depth}}
\newcommand{\grade}{\operatorname{grade}}
\newcommand{\Min}{\operatorname{Min}}
\newcommand{\Max}{\operatorname{Max}}
\DeclareMathOperator{\pnt}{\raise 0.5mm \hbox{\large\bf.}}
\DeclareMathOperator{\kgr}{K.grade}
\DeclareMathOperator{\chgr}{\check{C}.grade}
\def\+#1{\relax\ifmmode\if\noexpand #1\relax \mathop{\kern
    0pt^+{#1}}\nolimits\else \kern 0pt^+\!#1 \fi\else$^*$#1\fi}
\newtheorem{thm}{\bf Theorem}[section]
\newtheorem{lem}[thm]{\bf Lemma}
\newtheorem{cor}[thm]{\bf Corollary}
\newtheorem{prop}[thm]{\bf Proposition}
\theoremstyle{definition}
\newtheorem{ex}[thm]{\bf Example}
\theoremstyle{plain}
\newtheorem*{thm*}{Theorem}
\title[Cohen-Macaulay]{Cohen-Macaulay properties under
    the amalgamated construction}
\author{Y. Azimi, P. Sahandi, and N. Shirmohammadi}
\address{Department of Mathematics, University of Tabriz, Tabriz, Iran.}
\email{u.azimi@tabrizu.ac.ir}
\address{Department of Mathematics, University of Tabriz, Tabriz, Iran.}
\email{sahandi@ipm.ir}
\address{Department of Mathematics, University of Tabriz, Tabriz, Iran.}
\email{nshirmohammadi@yahoo.com, shirmohammadi@tabrizu.ac.ir}
\keywords{Amalgamated algebra, Cohen-Macaulay ring, Koszul grade, Non-Noetherian ring}
\subjclass[2010]{13A15, 13C14, 13C15}
\begin{document}

\begin{abstract}
Let $A$ and $B$ be commutative rings with unity, $f:A\to B$ a ring
homomorphism and $J$ an ideal of $B$. Then the subring
$A\bowtie^fJ:=\{(a,f(a)+j)|a\in A$ and $j\in J\}$ of $A\times B$ is
called the amalgamation of $A$ with $B$ along $J$ with respect to
$f$. In this paper, we study the property of Cohen-Macaulay in the
sense of ideals which was introduced by Asgharzadeh and Tousi, a
general notion of the usual Cohen-Macaulay property (in the
Noetherian case), on the ring $A\bowtie^fJ$. Among other things, we
obtain a generalization of the well-known result that when the
Nagata's idealization is Cohen-Macaulay.
\end{abstract}

\maketitle

\section{Introduction}
\label{intro} The theory of Cohen-Macaulay rings is a major area of
study in commutative algebra and algebraic geometry. From the
appearance of the notion of Cohen-Macaulayness, this notion admits a
rich theory in commutative \emph{Noetherian} rings. There have been
attempts to extend this notion to commutative \emph{non-Noetherian}
rings, since Glaz raised the question that whether there exists a
generalization of the notion of Cohen-Macaulayness with certain
desirable properties to non-Noetherian rings \cite{G0}, \cite{G}. In
order to provide an answer to the question of Glaz \cite[Page
220]{G}, recently several notions of Cohen-Macaulayness for
non-Noetherian rings and modules were introduced in \cite{H},
\cite{HM}, and \cite{AT}. Among those is the Cohen-Macaulay in the
sense of $\mathcal{A}$, introduced by Asgharzadeh and Tousi
\cite{AT}, where $\mathcal{A}$ is a non-empty subclass of ideals of
a commutative ring (the definition will be given later in Section
2).

In \cite{DFF} and \cite{DFF2}, D'Anna, Finocchiaro, and Fontana have
introduced the following new ring construction. Let $A$ and $B$ be
commutative rings with unity, let $J$ be an ideal of $B$ and let
$f:A\to B$ be a ring homomorphism. The \emph{amalgamation of $A$
with $B$ along $J$ with respect to $f$} is the following subring
$$A\bowtie^fJ:=\{(a,f(a)+j)|a\in A\text{ and }j\in J\}$$ of $A\times
B$. This construction generalizes the amalgamated duplication of a
ring along an ideal (introduced and studied in \cite{D}, \cite{DF}).
Moreover, several classical constructions such as the Nagata's
idealization (cf. \cite[page 2]{Na}, \cite[Chapter VI, Section
25]{Hu}), the $A + XB[X]$ and the $A+XB[[X]]$ constructions can be
studied as particular cases of this new construction (see
\cite[Examples 2.5 and 2.6]{DFF}).

Below, we review briefly some known results about the behavior of
Cohen-Macaulayness under the amalgamated construction and its
particular cases.

Let $M$ be an $A$-module. In 1955, Nagata introduced a ring
extension of $A$ called the \emph{trivial extension} of $A$ by $M$
(or the \emph{idealization} of $M$ in $A$), denoted here by
$A\ltimes M$. Now, assume that $A$ is Noetherian local and that $M$
is finitely generated. It is well known that the trivial extension
$A\ltimes M$ is Cohen-Macaulay if and only if $A$ is Cohen-Macaulay
and $M$ is maximal Cohen-Macaulay, see \cite[Corollary 4.14]{AW}.

Let $A$ be a Noetherian local ring and $I$ be an ideal of $A$.
Consider the amalgamated duplication $A\bowtie I:=\{(a,a+i)|a\in
A\text{ and }i\in I\}$ as in \cite{D}, \cite{DF}. The properties of
being Cohen-Macaulay, generalized Cohen-Macaulay, Gorenstein,
quasi-Gorenstein, $(S_n)$, $(R_n)$ and normality under the
construction of amalgamated duplication were studied further in many
research papers such as \cite{D}, \cite{DFF1}, \cite{BSTY}, and
\cite{SSh}.

In \cite{DFF1}, under the condition that $A$ is Cohen-Macaulay
(Noetherian local) and $J$ is a finitely generated $A$-module, it is
observed that $A\bowtie^f J$ is a Cohen-Macaulay ring if and only if
it is a Cohen-Macaulay $A$-module if and only if $J$ is a maximal
Cohen-Macaulay module. Then, in \cite{SSS}, assuming $(A,\mm)$ is
Noetherian local, $J$ is contained in the Jacobson radical of $B$
such that $\depth_AJ<\infty$ and that $f^{-1}(\qq)\neq\mm$, for each
$\qq\in\spec(B)\backslash V(J)$, it is shown that $A\bowtie^f J$ is
Cohen-Macaulay if and only if $A$ is Cohen-Macaulay and $J$ is a big
Cohen-Macaulay $A$-module (i.e. $\depth_AJ=\dim A$).

The next natural step is to seek when the amalgamated algebra
$A\bowtie^fJ$ is Cohen-Macaulay without Noetherian assumption.

In this paper, we investigate the property of Cohen-Macaulay in the
sense of ideals (resp. maximal ideals, finitely generated ideals) on
the amalgamation. More precisely, in Section 2, we recall some
essential definitions and results on which we base our approach. In
Section 3, we fix our notation and give some elementary results on
the behavior of the Koszul grade with respect to amalgamation. In
Section 4, we classify some necessary and sufficient conditions for
the amalgamated algebra $A\bowtie^f J$ to be Cohen-Macaulay in the
sense of ideals (resp. maximal ideals, finitely generated ideals)
(Theorems \ref{m}, \ref{jzirenil} and \ref{gd}). Among the
applications of our results are the classification of when the
trivial extension $A\ltimes M$ and the amalgamated duplication
$A\bowtie I$ are Cohen-Macaulay in the sense of ideals (Corollaries
\ref{t} and \ref{d}).

\section{Preliminaries}

To facilitate the reading of the paper, we recall in this section
some preliminary definitions and properties to be used later.

Let $\kb$ be a finitely generated ideal of a commutative ring $A$
and $M$ be an $A$-module. Assume that $\kb$ is generated by the
sequence $\xb = x_1,\ldots ,x_\ell$. We denote the Koszul complex
related to $\xb$ by $\mathbb{K}_{\bullet}(\xb)$. The \emph{Koszul
grade of $\kb$ on $M$} is defined by
$$\kgr_A (\kb,M):= \inf \{i\in \N\cup \{0\}|H^i( \Hom_A(\mathbb{K}_{\bullet}(\xb),M))\neq 0\}.$$
It follows from \cite[Corollary 1.6.22]{BH98} and \cite[Proposition
1.6.10(d)]{BH98} that this does not depend on the choice of
generating sets of $\kb$.

Let $\ka$ be an arbitrary ideal of $A$. One can then define the
Koszul grade of $\ka$ on $M$ by setting
$$\kgr_A (\ka, M) := \sup \{ \kgr_A (\kb, M)| \kb\text{ is a finitely generated subideal of }\ka\}.$$
In view of \cite[Proposition 9.1.2(f)]{BH98}, this definition
coincides with the original one for finitely generated ideals. In
particular, when $(A,\mm)$ is local Noetherian, $\depth_AM$ was
defined by $\kgr_A (\mm, M)$ in \cite[Section 9.1]{BH98}.

The \emph{\v{C}ech grade of $\kb$ on $M$} is defined by
$$\chgr_A (\kb, M):= \inf \{i\in \N\cup \{0\}| H^i_{\xb} (M) \neq 0\}.$$
Here $H^i_{\xb}(M)$ denotes the $i$-th cohomology of the
$\check{C}ech$ complex of $M$ related to $\xb$. It follows from
\cite[Proposition 2.1(e)]{HM} that $H^i_{\xb}(M)$ is independent of
the choice of sequence of generators for $\kb$. One can then define
$$\chgr_A (\ka, M) := \sup \{ \chgr_A (\kb, M)| \kb\text{ is a finitely generated subideal of }\ka\}.$$
By virtue of \cite[Proposition 2.7]{HM}, one has $\chgr_A (\ka,
M)=\kgr_A (\ka,M)$.

Let $\pp$ a prime ideal of $R$. By $\height_M\pp$, we mean the Krull
dimension of the $R_\pp$-module $M_\pp$. Also,
$$\height_M\ka:=\inf \{\height_M\pp|\pp \in \Supp_A(M) \cap V(\ka) \}.$$

Let $\mathcal{A}$ be a non-empty subclass of the class of all ideals
of the ring $A$ and $M$ be an $A$-module. We say that $M$ is
\emph{Cohen-Macaulay in the sense of $\mathcal{A}$} if
$\height_M(\ka)=\kgr_A(\ka,M)$ for all ideals $\ka$ in
$\mathcal{A}$, see \cite[Definition 3.1]{AT}. The classes we are
interested in are the class of all maximal ideals, the class of all
ideals and the class of all finitely generated ideals. Assume that
$A$ is Noetherian. It is well-known that $A$ is Cohen-Macaulay (in
the sense of the original definition in the Noetherian setting) if
and only if it is Cohen-Macaulay in the sense of ideals (resp.
maximal ideals, finitely generated ideals) see \cite[Corollary
2.1.4]{BH98}.

\section{The Koszul grade on amalgamation}

Let us fix some notation which we shall use frequently throughout
the paper: $A,\  B$ are two commutative rings with unity, $f:A\to B$
is a ring homomorphism, and $J$ denotes an ideal of $B$. So that $J$
is an $A$-module via the homomorphism $f$. In the sequel, we
consider the contraction and extension with respect to the natural
embedding $\iota _A: A\to A\bowtie^f J$ defined by $\iota _A
(x)=(x,f(x))$, for every $x\in A$. In particular, for every ideal
$\ka$ of $A$, $\ka^e$  means $\ka(A\bowtie^f J)$.

This section is devoted to prove some lemmas on the behavior of the
Koszul grade on amalgamation. These lemmas provide the key for some
crucial arguments later in this paper. In the proof of the next
lemma, we use $H_{i}(\xb,M)$ to denote the $i$th Koszul homology of
an $A$-module $M$ with respect to a finite sequence $\xb\subset A$.

\begin{lem}\label{kgr1}
Let the notation and hypotheses be as in the beginning of this
section. Then
\begin{enumerate}
  \item for any finitely generated ideal $\kb$ of $A$, one has the
  equality
$$\kgr_{A\bowtie^f J} (\kb^e,A\bowtie^f J)= \min
\{\kgr_A(\kb,A),\kgr_A(\kb,J)\}.$$
  \item for any ideal $\ka$ of $A$, one has the inequality
  $$\kgr_{A\bowtie^f J} (\ka^e,A\bowtie^f J)\le \min
  \{\kgr_A(\ka,A),\kgr_A(\ka,J)\}.$$
\end{enumerate}
\end{lem}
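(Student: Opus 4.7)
The plan is to exploit the $A$-module decomposition $A\bowtie^f J \cong A \oplus J$ (given by $(a, f(a)+j) \mapsto (a, j)$, where $J$ carries the $A$-action induced by $f$) in order to reduce Koszul-grade computations over $A\bowtie^f J$ to Koszul-grade computations over $A$.

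For part (i), I would fix a generating sequence $\xb = x_1,\ldots,x_\ell$ of $\kb$, and observe that $\kb^e$ is generated over $A\bowtie^f J$ by $\iota_A(\xb)$. Since $\iota_A$ makes $A\bowtie^f J$ into an $A$-algebra, the Koszul complex $\mathbb{K}_\bullet(\iota_A(\xb), A\bowtie^f J)$ is obtained by base change from $\mathbb{K}_\bullet(\xb, A)$, i.e.\ it is isomorphic to $\mathbb{K}_\bullet(\xb, A) \otimes_A (A\bowtie^f J)$. Applying $\Hom_{A\bowtie^f J}(-, A\bowtie^f J)$ and using the tensor-hom adjunction reduces the computation to $\Hom_A(\mathbb{K}_\bullet(\xb, A), A\bowtie^f J)$. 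The decomposition $A\bowtie^f J \cong A \oplus J$ then splits this as a direct sum of cochain complexes of $A$-modules $\Hom_A(\mathbb{K}_\bullet(\xb, A), A) \oplus \Hom_A(\mathbb{K}_\bullet(\xb, A), J)$. Taking cohomology commutes with finite direct sums, so the smallest index where the total cohomology is nonzero is the minimum of the two individual thresholds, which are precisely $\kgr_A(\kb, A)$ and $\kgr_A(\kb, J)$.

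For part (ii), I would reduce to (i). Any finitely generated subideal $\kb'$ of $\ka^e$ is contained in $\kb^e$ for some finitely generated subideal $\kb$ of $\ka$: the finitely many generators of $\kb'$ are $A\bowtie^f J$-linear combinations of elements of $\iota_A(\ka)$, and thus involve only finitely many elements of $\ka$. By monotonicity of the Koszul grade on finitely generated subideals (immediate from the sup-definition, since it agrees with the original definition for finitely generated ideals) and by part (i),
\[
\kgr_{A\bowtie^f J}(\kb', A\bowtie^f J) \le \kgr_{A\bowtie^f J}(\kb^e, A\bowtie^f J) = \min\{\kgr_A(\kb, A), \kgr_A(\kb, J)\} \le \min\{\kgr_A(\ka, A), \kgr_A(\ka, J)\}.
\]
Taking the supremum over $\kb'$ yields (ii).

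The main technical point will be executing the tensor-hom / base-change identification cleanly, keeping careful track of both the $A$- and $A\bowtie^f J$-module structures on the Koszul complexes (the crucial input being that multiplication by $\iota_A(x_i)$ on any $A\bowtie^f J$-module coincides with the $A$-action by $x_i$). I would also remark that (ii) is only an inequality in general, because the suprema defining $\kgr_A(\ka, A)$ and $\kgr_A(\ka, J)$ need not be attained by a common finitely generated subideal of $\ka$, so the equality in (i) does not propagate directly.
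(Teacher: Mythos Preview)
Your proposal is correct and follows essentially the same route as the paper: both reduce the Koszul-grade computation over $A\bowtie^f J$ to one over $A$ via the $A$-module splitting $A\bowtie^f J\cong A\oplus J$, and then use additivity of Koszul (co)homology on direct sums. The only cosmetic differences are that the paper quotes \cite[Proposition~2.2(iv)]{AT} for the base-change step (whereas you reprove it via tensor--hom), and for~(ii) the paper applies that same proposition to the arbitrary ideal $\ka$ and bounds $\sup\min$ by $\min\sup$, while you instead chase finitely generated subideals of $\ka^e$ back to finitely generated subideals of $\ka$; both are equally valid.
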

\begin{proof}
Assume that $\kb$ is a finitely generated ideal of $A$ and that
$\kb$ is generated by a finite sequence $\xb$ of length $\ell$.
Then, using \cite[Proposition 2.2(iv)]{AT} together with
\cite[Proposition 2.7]{HM}, we have
\begin{align*}
        & \kgr_{A\bowtie^f J} (\kb^e,A\bowtie^f J)\\
        =&\kgr_A (\kb,A\bowtie^f J)\\
        =&\sup \{k\ge 0 |  H_{\ell -i}(\xb, A\bowtie^f J)=0\ \text{for all}\ i<k  \}\\
        =&\sup \{k\ge 0 |  H_{\ell -i}(\xb, A)\oplus H_{\ell -i}(\xb, J)=0\ \text{for all}\ i<k  \}\\
        =&\min \{\kgr_A(\kb,A),\kgr_A(\kb,J)\}.
\end{align*}
For the third equality, one notices that the amalgamation
$A\bowtie^f J$, as an $A$-module, is isomorphic to the direct sum of
$A\oplus J$ using \cite[Lemma 2.3(4)]{DFF}. This proves (1). To
obtain (2), assume that $\ka$ is an ideal of $A$. Let $\Sigma$ be
the class of all finitely generated subideals of $\ka$. It follows
from the definition that
\begin{align*}
        &\kgr_A (\ka,A\bowtie^f J)\\
        =&\sup \{ \kgr_A (\kb,A\bowtie^f J) | \kb \in \Sigma \}\\
        =& \sup \{ \min \{\kgr_A(\kb,A),\kgr_A(\kb,J)\} | \kb \in \Sigma \}\\
        \le& \min \{\sup \{\kgr_A(\kb,A)| \kb \in \Sigma \}, \sup \{\kgr_A(\kb,J)| \kb \in \Sigma \}\}\\
        =&\min \{\kgr_A(\ka,A),\kgr_A(\ka,J)\}.
\end{align*}
Again, using this in conjunction with \cite[Proposition
2.2(iv)]{AT}, one deduces that
\begin{align*}
        \kgr_{A\bowtie^f J}(\ka^e,A\bowtie^f J)
        &=\kgr_A(\ka,A\bowtie^f J)\\
        &\le \min \{\kgr_A(\ka,A),\kgr_A(\ka,J)\}.
\end{align*}
\end{proof}

\begin{lem}\label{kgr}
Assume that $A$ is  Cohen-Macaulay in the sense of (finitely
generated) ideals and $\kgr_A(\ka,J)\ge \height \ka$ for every
(finitely generated) ideal $\ka$ of $A$. Then
$$\kgr_{A\bowtie^f J} (\ka^e,A\bowtie^f J)=\kgr_A(\ka,A)\le \kgr_A(\ka,J)$$
for any (finitely generated) ideal $\ka$ of $A$.
\end{lem}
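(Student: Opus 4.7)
The plan is to reduce everything to the finitely generated case and then push through the suprema carefully. First, the hypothesis that $A$ is Cohen-Macaulay in the sense of (finitely generated) ideals means $\height_A \kb = \kgr_A(\kb,A)$ for every (f.g.) ideal $\kb$. Combining this with the standing assumption $\kgr_A(\ka,J)\ge \height\ka$, we immediately obtain $\kgr_A(\ka,J)\ge \kgr_A(\ka,A)$. This is precisely the inequality appearing on the right-hand side of the conclusion, and it is also the ingredient that turns the minima of the previous lemma into single terms.

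Next I would handle the f.g. case as a direct application of Lemma~\ref{kgr1}(1): for a finitely generated ideal $\kb$,
\[
\kgr_{A\bowtie^f J}(\kb^e, A\bowtie^f J)=\min\{\kgr_A(\kb,A),\kgr_A(\kb,J)\}=\kgr_A(\kb,A),
\]
where the last equality uses the inequality noted above. This disposes of the ``finitely generated ideals'' version of the statement.

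For an arbitrary ideal $\ka$ (the ``all ideals'' version), I would pass through finitely generated subideals. By \cite[Proposition 2.2(iv)]{AT} and the definition of Koszul grade,
\[
\kgr_{A\bowtie^f J}(\ka^e, A\bowtie^f J)=\kgr_A(\ka, A\bowtie^f J)=\sup\{\kgr_A(\kb, A\bowtie^f J)\mid \kb\in\Sigma\},
\]
where $\Sigma$ is the family of finitely generated subideals of $\ka$. For each $\kb\in\Sigma$, Lemma~\ref{kgr1}(1) together with the f.g.\ case just treated yields $\kgr_A(\kb, A\bowtie^f J)=\kgr_A(\kb,A)$. Taking the supremum over $\Sigma$ gives
\[
\kgr_{A\bowtie^f J}(\ka^e, A\bowtie^f J)=\sup\{\kgr_A(\kb,A)\mid \kb\in\Sigma\}=\kgr_A(\ka,A).
\]

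The proof is essentially bookkeeping once one observes how the Cohen-Macaulay hypothesis converts the $\min$ in Lemma~\ref{kgr1}(1) into a single term. The only point requiring a bit of care is the step in the non-f.g.\ case: one must verify that the hypothesis $\kgr_A(\kb,J)\ge\height\kb$ together with $\height\kb=\kgr_A(\kb,A)$ genuinely applies to every finitely generated subideal $\kb$ of $\ka$, not merely to $\ka$ itself. This is automatic from the statement, which asserts the hypothesis for \emph{every} (f.g.) ideal, so there is no real obstacle; the argument goes through cleanly without needing to invoke Lemma~\ref{kgr1}(2) for the upper bound, since the equality for each $\kb\in\Sigma$ already yields both inequalities simultaneously.
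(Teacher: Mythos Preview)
Your proof is correct and follows essentially the same route as the paper: reduce to finitely generated subideals via \cite[Proposition~2.2(iv)]{AT}, apply the $\min$-formula of Lemma~\ref{kgr1}(1) to each, collapse the minimum to $\kgr_A(\kb,A)$, and take the supremum. The only cosmetic difference is that the paper justifies $\kgr_A(\kb,A)\le\kgr_A(\kb,J)$ by invoking the general grade--height inequality \cite[Lemma~3.2]{AT} (so $\kgr_A(\kb,A)\le\height\kb\le\kgr_A(\kb,J)$), whereas you use the Cohen--Macaulay hypothesis to get the equality $\kgr_A(\kb,A)=\height\kb$; both yield the same conclusion.
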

\begin{proof}
Assume that $\ka$ is a (finitely generated) ideal of $A$ and let
$\Sigma$ be the class of all finitely generated subideals of $\ka$.
Then, as in the proof of Lemma \ref{kgr1}, again, using
\cite[Proposition 2.2(iv)]{AT}, we have
\begin{align*}
        &\kgr_{A\bowtie^f J}(\ka^e,A\bowtie^f J)\\
        =&\kgr_A (\ka,A\bowtie^f J)\\
        =&\sup \{ \kgr_A (\kb,A\bowtie^f J) | \kb \in \Sigma \}\\
        =&\sup \{ \min \{\kgr_A (\kb,A),\kgr_A (\kb,J)\} | \kb \in \Sigma \}\\
        =&\sup \{ \kgr_A (\kb,A) | \kb \in \Sigma \}\\
        =&\kgr_A (\ka,A).
\end{align*}
The forth equality follows from \cite[Lemma 3.2]{AT} and our
assumption. This completes the proof.
\end{proof}

The following lemma is a slight modification of \cite[Lemma
3.2]{AT}.

\begin{lem}\label{tamime 3.2}
Let $\ka$ be an ideal of $A$ and $M$ be an $A$-module.
\begin{enumerate}
  \item Let $A$ be quasi-local with the maximal ideal $\mm$. If $\kgr_A
(\mm, M)<\infty$, then $\kgr_A (\mm, M)\leq\dim A$.
  \item If, for every minimal prime ideal $\pp$ over $\ka$, $\kgr_A(\pp
R_\pp, M_\pp)< \infty$ (e.g. when $M$ is finitely generated), then
$\kgr_A(\ka, M)\le \height\ka.$
\end{enumerate}
\end{lem}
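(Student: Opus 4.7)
The plan is to model the proof on that of \cite[Lemma 3.2]{AT}, which the lemma statement explicitly flags as the source, with minor adjustments to accommodate an $A$-module $M$ that is not assumed finitely generated (the finiteness of the relevant Koszul grades is now hypothesized explicitly rather than inherited from finite generation of $M$).

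For part (1), I would unfold the definition of Koszul grade as a supremum over finitely generated subideals of $\mm$. Since $\kgr_A(\mm,M)<\infty$, it is enough to show $\kgr_A(\kb,M)\le\dim A$ for every finitely generated $\kb\subseteq\mm$. For such a $\kb$ one has $\height\kb\le\height\mm=\dim A$, so the task reduces to the bound $\kgr_A(\kb,M)\le\height\kb$ for finitely generated ideals. I would obtain this either by extracting the portion of AT's argument that does not rely on finite generation of $M$, or, more robustly, by identifying $\kgr_A$ with $\chgr_A$ via \cite[Proposition 2.7]{HM} and applying a Grothendieck-type vanishing for \v{C}ech cohomology above the dimension.

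For part (2), the idea is to reduce to (1) by localizing at a minimal prime $\pp$ of $\ka$ and exploiting that Koszul cohomology commutes with localization: for any finite sequence $\xb\subseteq A$,
\[
H^i\bigl(\Hom_A(\mathbb{K}_\bullet(\xb),M)\bigr)_\pp \;\cong\; H^i\bigl(\Hom_{A_\pp}(\mathbb{K}_\bullet(\xb),M_\pp)\bigr).
\]
This yields $\kgr_A(\kb,M)\le\kgr_{A_\pp}(\kb A_\pp,M_\pp)$ for every finitely generated $\kb\subseteq\ka$. Taking the supremum over such $\kb$ and using $\kb A_\pp\subseteq\pp A_\pp$, I obtain
\[
\kgr_A(\ka,M)\;\le\;\kgr_{A_\pp}(\ka A_\pp,M_\pp)\;\le\;\kgr_{A_\pp}(\pp A_\pp,M_\pp).
\]
The hypothesis $\kgr_A(\pp A_\pp,M_\pp)<\infty$ together with part (1) applied to the quasi-local ring $A_\pp$ and the module $M_\pp$ bounds the right-hand side by $\dim A_\pp=\height\pp$. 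Taking the infimum over all minimal primes $\pp$ over $\ka$ then yields $\kgr_A(\ka,M)\le\inf_\pp\height\pp=\height\ka$.

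The main obstacle I expect is in part (1): confirming that the AT argument for the bound on a finitely generated subideal does not secretly depend on finite generation of $M$. If it does, the fallback is the \v{C}ech-cohomology route sketched above, which only requires the finiteness of $\kgr_A(\mm,M)$ and the standard identification $\kgr_A=\chgr_A$.
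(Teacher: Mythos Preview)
Your proposal is correct and aligns with the paper's argument, with one caveat on emphasis. For part~(1), the paper goes straight to what you call the ``fallback'': it identifies $\kgr_A$ with $\chgr_A$ via \cite[Proposition~2.7]{HM} and then invokes \cite[Proposition~2.4]{HM} to bound $\chgr_A(\xb,M)\le\dim A$ for any finite sequence $\xb\subseteq\mm$, without passing through $\height\kb$ at all. Your primary route, reducing to $\kgr_A(\kb,M)\le\height\kb$, is a strictly stronger intermediate claim than needed and is precisely where the finite-generation-of-$M$ concern lives; the paper sidesteps this entirely, so you should too. For part~(2), your explicit localization argument is exactly what the paper packages into the citations of \cite[Proposition~2.2(ii),(iii)]{AT}: those results encode the inequalities $\kgr_A(\ka,M)\le\kgr_{A_\pp}(\pp A_\pp,M_\pp)$ and the transfer of finiteness, after which (1) finishes the job. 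So the substance is the same; you have simply unpacked the cited black boxes.
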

\begin{proof}
(1) Using \cite[Proposition 2.7]{HM}, it is enough for us to show
that $\chgr_A (\mm, M)\leq\dim A$. In order to prove this, assume
that $\dim A<\infty$ and let ${\bf x}$ be a finite sequence of
elements in $\mm$. It follows from \cite[Proposition 2.4]{HM} that
$\chgr_A({\bf x},M)\le \dim A$. Therefore $\chgr_A(\mm, M)\leq\dim
A$. (2) Notice, by \cite[Proposition 2.2(iii)]{AT}, that
$\kgr_A(\ka, M)< \infty$. Then, by \cite[Proposition 2.2(ii) and
(iii)]{AT}, one may assume that $A$ is quasi-local with the maximal
ideal $\mm$. Now (1) completes the proof.
\end{proof}

\section{Main results}

Assume that $A$ is Noetherian local, and that $J$ is contained in
the Jacobson radical of $B$ and it is a finitely generated
$A$-module. Recall that a finitely generated module $M$ over $A$ is
called a \emph{maximal Cohen-Macaulay $A$-module} if $\depth_AM=\dim
A$. Note that, in this circumstance, $\depth_AM$ equals the common
length of the maximal $M$-regular sequences in the maximal ideal of
$A$. In \cite[Corollary 2.5]{SSS}, it is shown that $A\bowtie^f J$
is Cohen-Macaulay if and only if $A$ is Cohen-Macaulay and $J$ is a
maximal Cohen-Macaulay $A$-module. Our first main result improves
this corollary by removing the Noetherian assumption.

The reader should be aware that when we say $A\bowtie^f J$ is
Cohen-Macaulay in the sense of a non-empty class of ideals, we mean
$A\bowtie^f J$ is Cohen-Macaulay as a ring.

\begin{thm}\label{m}
Assume that $(A,\mm)$ is quasi-local such that $\mm$ is finitely
generated. Assume that $J$ is contained in the Jacobson radical of
$B$ and it is finitely generated as an $A$-module. Then $A\bowtie^f
J$ is Cohen-Macaulay (ring) in the sense of maximal ideals if and
only if $A$ is Cohen-Macaulay in the sense of maximal ideals and
$\kgr_A(\mm,J)=\dim A$.
\end{thm}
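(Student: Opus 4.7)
The plan is to show that $A\bowtie^f J$ is quasi-local with a unique maximal ideal $\Mcc := \mm\bowtie^f J$ of height $\dim A$, and then to reduce the Koszul grade of $\Mcc$ to that of the extended ideal $\mm^e$, for which Lemma \ref{kgr1}(1) gives an exact formula.

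First I would record structural facts. Since $A\bowtie^f J \cong A\oplus J$ as an $A$-module via $\iota_A$ and $J$ is finitely generated over $A$, the embedding $\iota_A : A \to A\bowtie^f J$ is module-finite, hence integral, and so $\dim(A\bowtie^f J) = \dim A$. Moreover $\Mcc$ is a finitely generated ideal of $A\bowtie^f J$, since $\mm$ and $J$ are both finitely generated over $A$. Using the classification of primes of the amalgamation from \cite{DFF} together with the hypotheses that $A$ is quasi-local and $J\subseteq \jac(B)$, the primes of $A\bowtie^f J$ lying above $\mm$ (equivalently, the primes containing $\mm^e$) are of one of two types: $\Mcc$ itself, and ideals of the form $\bar{\qq}^f$ corresponding to primes $\qq$ of $B$ with $f^{-1}(\qq)=\mm$ and $J\not\subseteq \qq$.

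The main obstacle is to prove $\sqrt{\mm^e} = \Mcc$, i.e., to rule out the stray primes of the second type. If such a $\qq$ existed, then $B/\qq$ would be a domain containing $A/\mm$ as a subring, and $J/(J\cap \qq)$ would be a nonzero, finitely generated $A/\mm$-submodule of $B/\qq$ that is simultaneously an ideal of $B/\qq$. For any $j_0 \in J\setminus \qq$, the powers $\bar{j}_0, \bar{j}_0^{\,2}, \ldots$ all lie in the finite-dimensional $A/\mm$-vector space $J/(J\cap \qq)$, so a nontrivial linear dependence, after cancellation in the domain $B/\qq$, produces a unit $c\in A/\mm$ lying inside this ideal. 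Hence $J+\qq = B$, which contradicts $J\subseteq \jac(B)$: if $\qq$ is maximal we would have $J\subseteq \qq$, and if $\qq$ is strictly contained in a maximal $\mm'$ of $B$ we would have $J+\qq \subseteq \mm'\neq B$. This Nakayama-style step is where all three hypotheses on $A$, $J$, and $f$ enter in an essential way.

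Once $\sqrt{\mm^e} = \Mcc$ is in hand, $\Mcc$ is the unique maximal ideal of $A\bowtie^f J$ and $\height \Mcc = \dim A$. The equivalence of Koszul and \v{C}ech grades for finitely generated ideals (\cite[Proposition 2.7]{HM}), combined with the fact that \v{C}ech grade depends only on the radical of a finitely generated ideal, then yields
$$\kgr_{A\bowtie^f J}(\Mcc, A\bowtie^f J) \;=\; \kgr_{A\bowtie^f J}(\mm^e, A\bowtie^f J) \;=\; \min\{\kgr_A(\mm,A),\,\kgr_A(\mm,J)\},$$
the second equality being Lemma \ref{kgr1}(1) applied to the finitely generated ideal $\mm$. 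By Lemma \ref{tamime 3.2}(2), both $\kgr_A(\mm,A)$ and $\kgr_A(\mm,J)$ are at most $\dim A$, so the displayed minimum equals $\height\Mcc = \dim A$ if and only if both of these grades equal $\dim A$, which is exactly the asserted equivalence.
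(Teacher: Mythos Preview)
Your proposal is correct and follows essentially the same route as the paper: establish that $A\bowtie^fJ$ is quasi-local with $\sqrt{\mm^e}=\mm\bowtie^fJ$, use radical-invariance of \v{C}ech (hence Koszul) grade on finitely generated ideals to reduce to $\mm^e$, apply the splitting $A\bowtie^fJ\cong A\oplus J$ as $A$-modules (which is exactly Lemma~\ref{kgr1}(1)) to obtain the minimum formula, and conclude via integrality that $\dim(A\bowtie^fJ)=\dim A$. The one substantive difference is that the paper imports the equality $\sqrt{\mm(A\bowtie^fJ)}=\mm^{\prime_f}$ from \cite[Corollary~3.2 and Remark~3.3]{DFF1}, whereas you supply a self-contained Nakayama-type argument ruling out primes $\bar{\qq}^f$ over $\mm$; your direct approach has the virtue of making explicit exactly where the hypotheses $J\subseteq\jac(B)$ and ``$J$ finitely generated over $A$'' are used.
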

\begin{proof}
Assume that $\mm$ is generated by the sequence ${\bf
a}=a_1,\ldots,a_n$ and that $J$ is generated by the sequence
$\bb=b_1,\ldots,b_m$. Hence $\mm^{\prime_f}=\mm\bowtie^f J$, the
unique maximal ideal of $A\bowtie^f J$ \cite[Corollary
2.7(3)]{DFF1}, is generated by the sequence ${\bf
c}=(a_1,f(a_1)),\ldots, (a_n,f(a_n)),(0,b_1),\ldots,(0,b_m)$.
Notice, by \cite[Corollary 3.2 and Remark 3.3]{DFF1}, that one has
$\sqrt{\iota_A(\ab)(A\bowtie^f J)}=\sqrt{\mm(A\bowtie^f
J)}=\mm^{\prime_f}={\bf c}(A\bowtie^f J)$. Therefore
\begin{align*}
   \kgr_{A\bowtie^f J}(\mm^{\prime_f},A\bowtie^f J)
    &=\chgr_{A\bowtie^f J}(\mm^{\prime_f},A\bowtie^f J)\\
    & = \inf \{i|H^i_{\bf c}(A\bowtie^f J)  \neq 0\}\\
    & = \inf \{i|H^i_{\iota_A(\ab)}(A\bowtie^f J)  \neq 0\} \\
    & = \inf \{i|H^i_{\ab}(A\bowtie^f J)  \neq 0\} \\
    & = \inf \{i|H^i_{\ab}(A)\oplus H^i_{\ab}(J)  \neq 0\} \\
    & = \min \{\chgr_A(\mm,A),\chgr_A(\mm,J)\}\\
    & = \min \{\kgr_A(\mm,A),\kgr_A(\mm,J)\}.
\end{align*}
The first equality obtains by \cite[Proposition 2.7]{HM}, the third
equality follows from \cite[Proposition 2.1(e)]{HM} in conjunction
with $\sqrt{\iota_A(\ab)(A\bowtie^f J)}={\bf c}(A\bowtie^f J)$, the
forth equality deduces from \cite[Proposition 2.1(f)]{HM}, and the
fifth equality holds since as an $A$-module $A\bowtie^f J\cong
A\oplus J$ \cite[Lemma 2.3(4)]{DFF}.

Consequently, the conclusion yields by the equality
$$\kgr_{A\bowtie^f J}(\mm^{\prime_f},A\bowtie^f J)=\min \{\kgr_A(\mm,A),\kgr_A(\mm,J)\}$$
together with $\dim A\bowtie^f J=\dim A$. This last equality holds
true, since $A\bowtie^f J$ is integral over $A$ (see
\cite[Proposition 4.2]{DFF2}).
\end{proof}

\begin{cor} (See \cite[Corollary 2.5]{SSS})
Assume that $A$ is Noetherian local, and that $J$ is contained in
the Jacobson radical of $B$ and it is finitely generated as an
$A$-module. Then $A\bowtie^f J$ is Cohen-Macaulay (ring) if and only
if $A$ is Cohen-Macaulay and $J$ is a maximal Cohen-Macaulay
$A$-module.
\end{cor}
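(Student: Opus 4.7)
The corollary is a direct specialization of Theorem \ref{m} to the Noetherian local setting, so the plan is simply to verify that each piece of the hypothesis and conclusion translates correctly.

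First I would observe that the hypotheses of Theorem \ref{m} are satisfied: since $A$ is Noetherian local with maximal ideal $\mm$, the ideal $\mm$ is certainly finitely generated, and $J$ is assumed to be finitely generated as an $A$-module and contained in the Jacobson radical of $B$. Hence Theorem \ref{m} applies and gives
\[
A\bowtie^f J \text{ is CM in the sense of maximal ideals} \iff A \text{ is CM in the sense of maximal ideals and } \kgr_A(\mm,J) = \dim A.
\]

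Next I would translate both sides into the classical Noetherian language. On the left, by the remark at the end of Section 2 (citing \cite[Corollary 2.1.4]{BH98}), a Noetherian ring is Cohen-Macaulay in the classical sense if and only if it is Cohen-Macaulay in the sense of maximal ideals; applied to the Noetherian ring $A\bowtie^f J$ (which is Noetherian by \cite[Proposition 5.6]{DFF}, or one may note that $A\bowtie^f J$ is a finitely generated $A$-module since $J$ is), this yields that CM in the sense of maximal ideals is equivalent to CM as a ring. The same observation gives the translation for $A$.

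Finally, on the right, Section 2 recalls that when $(A,\mm)$ is Noetherian local the Koszul grade $\kgr_A(\mm, J)$ coincides with the classical depth $\depth_A J$. Therefore the condition $\kgr_A(\mm, J) = \dim A$ is exactly the definition of $J$ being a maximal Cohen-Macaulay $A$-module. Assembling these identifications from Theorem \ref{m} yields the corollary. No nontrivial obstacle is expected here; the entire argument is a dictionary translation, and all the real work was already done in the proof of Theorem \ref{m}.
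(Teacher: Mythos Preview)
Your proposal is correct and is exactly the dictionary translation the paper has in mind; indeed, the paper gives no separate proof for this corollary, presenting it as an immediate consequence of Theorem~\ref{m}, and your argument spells out precisely the verifications (Noetherianity of $A\bowtie^f J$ via finite generation over $A$, equivalence of CM with CM in the sense of maximal ideals, and $\kgr_A(\mm,J)=\depth_A J$) that make the translation work.
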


The key to the next theorem is given by the following elementary
lemmas. Their proofs are straightforward; so that we omit them.
Recall from \cite[Corollary 2.5]{DFF1} that the prime ideals of
$A\bowtie^fJ$ are of the type $\overline{\qq}^f$ or
$\pp^{\prime_f}$, for $\qq$ varying in $\spec(B)\backslash V(J)$ and
$\pp$ in $\spec(A)$, where
\begin{align*}
\pp^{\prime_f}:= & \pp\bowtie^fJ:=\{(p,f(p)+j)|p\in \pp, j\in J\}, \\[1ex]
\overline{\qq}^f:= & \{(a,f(a)+j)|a\in A, j\in J, f(a)+j\in \qq\}.
\end{align*}

\begin{lem}\label{e}
Assume that $\ka$ is an ideal of $A$, $\pp$ is a prime ideal of $A$
and that $\qq$ is a prime ideal of $B$. Then
\begin{enumerate}
  \item $\ka^e\subseteq\pp^{\prime_f}$ if and only if $\ka\subseteq\pp$.
  \item $\ka^e\subseteq\bar{\qq}^{f}$ if and only if $f(\ka)\subseteq\qq$.
\end{enumerate}
\end{lem}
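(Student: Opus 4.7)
The plan is to make both parts follow from an explicit description of the extended ideal $\ka^e$. Since the ideal $\ka^e$ of $A\bowtie^f J$ is generated by $\iota_A(\ka)=\{(a,f(a)) \mid a\in \ka\}$, I would first observe that every element of $\ka^e$ has the form $(a,f(a)+j)$ where $a\in\ka$ and $j\in Jf(\ka)$. Indeed, a typical element is a finite sum $\sum_k (a_k,f(a_k)+j_k)\cdot (x_k,f(x_k))$ with $a_k\in A$, $j_k\in J$, $x_k\in \ka$; expanding the product in $A\times B$ and combining components shows that the first coordinate is $\sum_k a_kx_k\in\ka$ and the second coordinate is $f(\sum_k a_kx_k)+\sum_k j_kf(x_k)$, whose tail lies in $Jf(\ka)\subseteq J$. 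In particular, for every $a\in\ka$ the generator $(a,f(a))$ itself lies in $\ka^e$, and this is the only observation from this description that will be used.

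For (i), the reverse direction is immediate: if $\ka\subseteq \pp$, then any element of $\ka^e$ has first coordinate in $\ka\subseteq \pp$ and second coordinate of the form $f(p)+j'$ with $j'\in J$, so it lies in $\pp^{\prime_f}$. For the forward direction, if $\ka^e\subseteq \pp^{\prime_f}$, then for each $a\in\ka$ the element $(a,f(a))\in \ka^e$ must lie in $\pp^{\prime_f}$, which by the very definition of $\pp^{\prime_f}$ forces $a\in\pp$.

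Part (ii) follows by the same pattern. If $f(\ka)\subseteq \qq$, then for an arbitrary element $(a,f(a)+j)\in \ka^e$ with $a\in\ka$ and $j\in Jf(\ka)$, both $f(a)\in f(\ka)\subseteq \qq$ and $j\in Jf(\ka)\subseteq \qq$, so $f(a)+j\in\qq$, placing the element in $\bar{\qq}^{f}$. Conversely, if $\ka^e\subseteq \bar{\qq}^f$, then for each $a\in\ka$ the element $(a,f(a))\in\ka^e$ belongs to $\bar{\qq}^{f}$, which forces $f(a)\in\qq$.

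There is no serious obstacle: the only step requiring any care is the description of $\ka^e$, which amounts to a one-line computation inside $A\bowtie^f J$. Once that is in hand, each implication in each of (i) and (ii) reduces to reading off one coordinate and comparing with the defining condition of $\pp^{\prime_f}$ or $\bar{\qq}^{f}$, confirming the authors' remark that the proof is straightforward.
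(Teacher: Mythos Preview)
Your proof is correct; the paper omits its own proof entirely, remarking only that the verification is straightforward, and your argument confirms this. One small expository quibble: the sentence ``this is the only observation from this description that will be used'' is not quite accurate, since in the reverse implication of (ii) you (correctly and necessarily) invoke the finer fact that the tail $j$ lies in $Jf(\ka)$, not merely in $J$---without this, one could not conclude $j\in\qq$ when $\qq\not\supseteq J$.
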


In the sequel, we use $\nil(B)$ to denote the nil radical of the
ring $B$.

\begin{lem}\label{ht}
Assume that $\ka$ is an ideal of $A$, $J\subseteq \nil(B)$ and that
$\pp$ is a prime ideal of $A$. Then
\begin{enumerate}
  \item $\pp\in\Min(\ka)$ if and only if $\pp^{\prime_f}\in\Min(\ka^e)$.
  \item $\height\ka=\height\ka^e$.
  \item $\Min(\pp^e)=\{\pp^{\prime_f}\}$.
  In particular $\height\pp^e=\height\pp^{\prime_f}$.
\end{enumerate}
\end{lem}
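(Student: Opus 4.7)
The plan hinges on one clean observation: since $J\subseteq\nil(B)$, every prime ideal of $B$ contains $J$, so $\spec(B)\backslash V(J)=\emptyset$. By the classification of prime ideals of $A\bowtie^f J$ recalled just before Lemma \ref{e}, every prime of $A\bowtie^f J$ is then of the form $\pp^{\prime_f}$ for some $\pp\in\spec(A)$, and this assignment is a bijection. I would first verify that, for $\pp,\qq\in\spec(A)$, one has $\qq\subseteq\pp$ if and only if $\qq^{\prime_f}\subseteq\pp^{\prime_f}$: the ``only if'' direction is immediate from the definitions, and for the ``if'' direction it suffices to test $(q,f(q))\in\qq^{\prime_f}$ (taking $j=0$) against $\pp^{\prime_f}$. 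Thus $\pp\mapsto\pp^{\prime_f}$ is an order isomorphism $\spec(A)\cong\spec(A\bowtie^f J)$.

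For (i), I would combine Lemma \ref{e}(i) with this order isomorphism: the primes in $V(\ka^e)$ correspond bijectively and order-preservingly to those in $V(\ka)$, so their minimal elements match up. For (ii), since $\height\ka=\inf\{\height\pp:\pp\in\Min(\ka)\}$ and the order isomorphism yields $\height\pp=\height\pp^{\prime_f}$ for every $\pp\in\spec(A)$ (chains in $\spec(A)$ ending at $\pp$ correspond to chains in $\spec(A\bowtie^f J)$ ending at $\pp^{\prime_f}$), the equality $\height\ka=\height\ka^e$ follows by taking the infimum through the bijection from (i).

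For (iii), Lemma \ref{e}(i) gives $\pp^e\subseteq\pp^{\prime_f}$ directly. For minimality, any prime of $A\bowtie^f J$ containing $\pp^e$ can be written as $\qq^{\prime_f}$ (using the observation above), and then $\pp\subseteq\qq$ by Lemma \ref{e}(i); if moreover $\qq^{\prime_f}\subseteq\pp^{\prime_f}$, the order isomorphism forces $\qq\subseteq\pp$, whence $\qq=\pp$. Hence $\Min(\pp^e)=\{\pp^{\prime_f}\}$, and the ``in particular'' assertion is then a special case of (ii) applied to $\ka=\pp$, or may be read off directly from $\height\pp^e=\inf\{\height\Pb:\Pb\in\Min(\pp^e)\}=\height\pp^{\prime_f}$.

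The only mild obstacle I anticipate is pinning down the order-isomorphism claim, but this is really just unwinding the definitions of $\pp^{\prime_f}$ and $\qq^{\prime_f}$; beyond that, all three items follow by tracing containments through the bijection, consistent with the authors' characterization of the lemma as elementary.
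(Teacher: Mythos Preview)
Your proposal is correct: the key reduction---that $J\subseteq\nil(B)$ forces $\spec(B)\backslash V(J)=\emptyset$, so that $\pp\mapsto\pp^{\prime_f}$ is an order isomorphism $\spec(A)\cong\spec(A\bowtie^f J)$---immediately yields all three assertions via Lemma \ref{e}(i), exactly as you outline. The paper omits the proof entirely as straightforward, and your argument is precisely the natural one the authors presumably had in mind; there is nothing further to compare.
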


\begin{prop}\label{j}
Let $\mathcal{A}$ be a non-empty class of ideals of $A$. Assume that
$\height\ka^e \ge \height \ka$ for each $\ka\in \mathcal{A}$. If
$A\bowtie^f J$ is Cohen-Macaulay (ring) in the sense of
$\mathcal{A}^e:=\{\ka^e|\ka\in\mathcal{A}\}$, then $A$ is
Cohen-Macaulay in the sense of $\mathcal{A}$ and $\kgr_A(\ka,J)\ge
\height\ka$ for each $\ka\in \mathcal{A}$.
\end{prop}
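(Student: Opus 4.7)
The plan is to chain three inequalities coming from the hypothesis, the Cohen-Macaulay assumption on $A\bowtie^fJ$, and Lemma \ref{kgr1}(2), and then close the loop using Lemma \ref{tamime 3.2}(2) applied to the module $A$ itself.

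Fix $\ka\in\mathcal{A}$. First, the standing hypothesis gives $\height\ka\le\height\ka^e$. Next, since $A\bowtie^fJ$ is Cohen-Macaulay in the sense of $\mathcal{A}^e$ and $\ka^e\in\mathcal{A}^e$, we have
\[
\height\ka^e=\kgr_{A\bowtie^f J}(\ka^e,A\bowtie^f J).
\]
Finally, Lemma \ref{kgr1}(2) bounds this quantity from above:
\[
\kgr_{A\bowtie^f J}(\ka^e,A\bowtie^f J)\le \min\{\kgr_A(\ka,A),\,\kgr_A(\ka,J)\}.
\]
Concatenating these gives $\height\ka\le\kgr_A(\ka,A)$ and $\height\ka\le\kgr_A(\ka,J)$. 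The second inequality is precisely the second conclusion of the proposition.

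For the first conclusion, I still need the reverse inequality $\kgr_A(\ka,A)\le\height\ka$. This is exactly Lemma \ref{tamime 3.2}(2) applied to the finitely generated $A$-module $M=A$ (so that $\kgr_A(\pp R_\pp,A_\pp)<\infty$ for every minimal prime $\pp$ of $\ka$). Combined with $\height\ka\le\kgr_A(\ka,A)$ obtained above, this yields $\kgr_A(\ka,A)=\height\ka$ for every $\ka\in\mathcal{A}$, so $A$ is Cohen-Macaulay in the sense of $\mathcal{A}$.

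There is no real obstacle here; the statement is essentially a bookkeeping consequence of Lemmas \ref{kgr1} and \ref{tamime 3.2}, provided one remembers that the height-inflation hypothesis on $\ka^e$ is what forces the Koszul-grade inequality $\height\ka\le\kgr_A(\ka,J)$ (without it, only $\height\ka^e$ — which could in principle be smaller than $\height\ka$ — would be controlled).
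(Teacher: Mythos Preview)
Your proof is correct and follows essentially the same chain of inequalities as the paper's own argument; the only cosmetic difference is that the paper writes the loop starting from $\kgr_A(\ka,A)$ and asserts the final inequality $\height\ka\ge\kgr_A(\ka,A)$ without citation, whereas you explicitly invoke Lemma~\ref{tamime 3.2}(2) for it.
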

\begin{proof}
Assume that $\ka\in \mathcal{A}$. Then, by Lemma \ref{kgr1}(2), we
have
\begin{align*}
    \kgr_A(\ka,A)
    &\geq \kgr_{A\bowtie^f J}(\ka^e,A\bowtie^f J)\\
    &=\height\ka^e\\
    &\geq\height \ka\\
    &\ge \kgr_A(\ka,A).
\end{align*}
Thus $\kgr_A(\ka,A)=\height \ka$. This means that $A$ is
Cohen-Macaulay in the sense of $\mathcal{A}$. Similarly, one obtains
$\kgr_A(\ka,J)\ge \height\ka$.
\end{proof}

It is not clear for us whether, in general, the inequality
$\height\ka^e \ge \height \ka$ holds for each $\ka\in \mathcal{A}$.
However, under the assumption $J\subseteq \nil(B)$, for each ideal
$\ka$, one has the equality $\height\ka^e=\height \ka$ by Lemma
\ref{ht}.

The second main result of the paper is the following theorem.

\begin{thm}\label{jzirenil}
Assume that $J\subseteq \nil (B)$. Then
  $A\bowtie^f J$ is Cohen-Macaulay (ring) in the sense of ideals
if and only if $A$ is Cohen-Macaulay in the sense of ideals and
$\kgr_A(\ka,J)\ge \height\ka$ for every ideal $\ka$ of $A$.
\end{thm}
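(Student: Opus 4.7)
The proof splits into the two implications, with both resting directly on results already established in the paper.

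For the forward direction, the plan is to apply Proposition~\ref{j} with $\mathcal{A}$ the class of all ideals of $A$. Lemma~\ref{ht}(ii) guarantees (in fact as an equality) the hypothesis $\height\ka^e\geq\height\ka$, using precisely $J\subseteq\nil(B)$. Since Cohen-Macaulayness of $A\bowtie^f J$ in the sense of all ideals restricts to Cohen-Macaulayness in the sense of the subclass $\mathcal{A}^e$, the proposition then yields both desired conclusions simultaneously.

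For the reverse direction, let $\Ic$ be an arbitrary ideal of $A\bowtie^f J$; I want to establish $\kgr_{A\bowtie^f J}(\Ic,A\bowtie^f J)=\height\Ic$. The principal step is a radical reduction to an extended ideal. Writing $\pi_1\colon A\bowtie^f J\to A$ for the first projection and setting $\ka:=\pi_1(\Ic)$, I would verify directly from the definitions that
\[
\Ic+(0\times J)=\ka\bowtie^f J=\ka^e+(0\times J),
\]
since any $(a,f(a)+j)\in\Ic$ satisfies $a\in\ka$ and, modulo $0\times J$, equals the generator $(a,f(a))$ of $\ka^e$. Because $J\subseteq\nil(B)$ makes $0\times J$ nil in $A\bowtie^f J$, this gives $\sqrt{\Ic}=\sqrt{\ka^e}$. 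Both $\height$ and $\kgr_{A\bowtie^f J}(-,A\bowtie^f J)$ are invariant under passage to the radical (for the latter, any finitely generated subideal of $\sqrt{\Ic}$ becomes, after raising each generator to a suitable power, a finitely generated subideal of $\Ic$ with unchanged radical, and the Koszul grade of a finitely generated ideal depends only on its radical). Consequently the target equality reduces to $\kgr_{A\bowtie^f J}(\ka^e,A\bowtie^f J)=\height\ka^e$.

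This final equality chains three preceding results: Lemma~\ref{kgr}, whose hypotheses coincide exactly with those of the theorem, gives $\kgr_{A\bowtie^f J}(\ka^e,A\bowtie^f J)=\kgr_A(\ka,A)$; Cohen-Macaulayness of $A$ in the sense of ideals gives $\kgr_A(\ka,A)=\height\ka$; and Lemma~\ref{ht}(ii) gives $\height\ka=\height\ka^e$. I expect the main obstacle to be precisely the radical reduction: the identity $\Ic+(0\times J)=\ka^e+(0\times J)$ is the one genuinely new calculation, after which the result is assembled by invoking the earlier lemmas.
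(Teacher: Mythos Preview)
Your forward direction matches the paper's exactly. For the reverse direction you take a genuinely different route, and it is correct.

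The paper's argument for the converse does not reduce an arbitrary ideal to an extended one via radicals. Instead, after establishing $\kgr_{A\bowtie^f J}(\ka^e,A\bowtie^f J)=\height\ka^e$ for every ideal $\ka$ of $A$ (exactly as you do, via Lemmas~\ref{kgr} and~\ref{ht}), it invokes a result of Northcott (\cite[Theorem~16 of Chapter~5]{N}) asserting that for any proper ideal $I$ there is a prime $\mathcal{P}\supseteq I$ with $\kgr_{A\bowtie^f J}(I,A\bowtie^f J)=\kgr_{A\bowtie^f J}(\mathcal{P},A\bowtie^f J)$. Since $J\subseteq\nil(B)$ forces $\mathcal{P}=\pp^{\prime_f}$ for some $\pp\in\spec(A)$, and $\pp^e\subseteq\pp^{\prime_f}$ with $\height\pp^e=\height\pp^{\prime_f}$ by Lemma~\ref{ht}(iii), a short chain of inequalities closes the argument.

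Your approach replaces the appeal to Northcott by the elementary observation that $\Ic+(0\times J)=\ka^e+(0\times J)$ with $\ka=\pi_1(\Ic)$, together with nilpotence of $0\times J$, to obtain $\sqrt{\Ic}=\sqrt{\ka^e}$; radical-invariance of height and of Koszul grade (the latter via the $\check{\mathrm C}$ech description, as you indicate) then reduces directly to the extended-ideal case. This is more self-contained and arguably more transparent, since it avoids an external black box and makes explicit the role of $J\subseteq\nil(B)$. The paper's route, on the other hand, packages the reduction into a single citation and gives a prime-ideal formulation that parallels the treatment in Proposition~\ref{int}.
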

\begin{proof}
One implication follows from Proposition \ref{j} and Lemma
\ref{ht}(2). Then, to prove the converse, assume that $A$ is
Cohen-Macaulay in the sense of ideals and $\kgr_A(\ka,J)\ge
\height\ka$ for every ideal $\ka$ of $A$. Let $\ka$ be an ideal of
$A$. First observe, by Lemmas \ref{kgr} and \ref{ht}(2), that
\begin{align*}
    \kgr_{A\bowtie^f J}(\ka^e,A\bowtie^f J)
    &=\kgr_A (\ka,A)\\
    &=\height\ka\\
    &=\height\ka^e.
\end{align*}
Now, let $I$ be an arbitrary proper ideal of $A\bowtie^f J$. Then,
by \cite[Theorem 16 of Chapter 5]{N}, there exists a prime ideal
$\mathcal{P}$ of $A\bowtie^f J$ containing $I$ such that
$\kgr_{A\bowtie^f J}(I,A\bowtie^f J)=\kgr_{A\bowtie^f
J}(\mathcal{P},A\bowtie^f J)$. Notice that
$\mathcal{P}=\pp^{\prime_f}$ for some prime ideal $\pp$ of $A$ by
\cite[Corollaries 2.5 and 2.7]{DFF1}. Hence, by Lemma \ref{ht}(3),
one has
\begin{align*}
    \height I
    &\geq \kgr_{A\bowtie^f J}(I,A\bowtie^f J)\\
    &=\kgr_{A\bowtie^f J}(\pp^{\prime_f},A\bowtie^f J)\\
    &\geq \kgr_{A\bowtie^f J}(\pp^e,A\bowtie^f J)\\
    &= \height\pp^e\\
    &= \height\pp^{\prime_f}\\
    &\geq \height I.
\end{align*}
Therefore $A\bowtie^f J$ is Cohen-Macaulay in the sense of ideals.
\end{proof}

The next example shows that, if, in the above theorem, the
hypothesis $J\subseteq \nil(B)$ is dropped, then the corresponding
statement is no longer always true.

\begin{ex}\label{ex}
Let $k$ be a field and $X,Y$ are algebraically independent
indeterminates over $k$. Set $A:=k[[X]]$, $B:=k[[X,Y]]$ and let
$J:=(X,Y)$. Let $f: A\to B$ be the inclusion. Note that $A$ is
Cohen-Macaulay and $\kgr_A(\ka,J)=\height \ka$ for every ideal $\ka$
of $A$. Indeed, if $\ka$ is a non-zero  proper ideal of $A$, and $a$
is a non-zero element of $\ka$, then one has
    $$1\leq\kgr_A(aA,J)\leq\kgr_A(\ka,J)\leq\height_J\ka\leq\height\ka\leq1.$$
The first and second inequalities follow from \cite[Proposition
9.1.2(a),(f)]{BH98}, respectively. While the third inequality
follows from Lemma \ref{tamime 3.2}(ii), the others are obvious.
However, $A\bowtie^f J$ which is isomorphic to $k[[X,
Y,Z]]/(Y,Z)\cap(X-Y )$ is not Cohen-Macaulay.
\end{ex}

Let $M$ be a $A$-module. Then $A\ltimes M$ denotes the \emph{trivial
extension} of $A$ by $M$. It should be noted that $0\ltimes M$ is an
ideal in $A\ltimes M$ and $(0\ltimes M)^2=0$. As in \cite[Example
2.8]{DFF}, if $B:=A\ltimes M$, $J:=0\ltimes M$, and $f:A\to B$ be
the natural embedding, then $A\bowtie^f J\cong A\ltimes M$. Hence
the next result follows from Theorem \ref{jzirenil}. With it, we not
only offer an application of Theorem \ref{jzirenil}, but we also
provide a generalization of the well-known characterization of when
the trivial extension is Cohen-Macaulay in the Noetherian (local)
case, see \cite[Corollary 4.14]{AW}.

\begin{cor}\label{t}
Let $M$ be an $A$-module. Then $A\ltimes M$ is Cohen-Macaulay (ring)
in the sense of ideals if and only if $A$ is  Cohen-Macaulay in the
sense of ideals and $\kgr_A (\ka,M)\ge \height \ka$ for every ideal
$\ka$ of $A$.
\end{cor}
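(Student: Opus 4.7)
The plan is to realize $A\ltimes M$ as an instance of the amalgamated construction and then simply invoke Theorem \ref{jzirenil}. Following \cite[Example 2.8]{DFF}, I would set $B := A\ltimes M$, take $J := 0\ltimes M$ as an ideal of $B$, and let $f : A \to B$ be the natural embedding $a\mapsto (a,0)$. A direct check (recorded already in \cite{DFF}) shows that the map $A\bowtie^{f} J \to A\ltimes M$ sending $(a, f(a)+(0,m)) = (a,(a,m))$ to $(a,m)$ is a ring isomorphism.

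Next I would verify the hypothesis of Theorem \ref{jzirenil}, namely $J\subseteq \nil(B)$. Since multiplication in the idealization satisfies $(0,m)(0,m') = (0,0)$, one has $(0\ltimes M)^{2}=0$, so every element of $J$ squares to zero and in particular lies in $\nil(B)$. I would also observe that the $A$-module structure induced on $J$ via $f$ agrees with the given $A$-module structure on $M$ under the identification $(0,m)\leftrightarrow m$: indeed, for $a\in A$ and $(0,m)\in J$ we have $f(a)\cdot(0,m) = (a,0)(0,m) = (0,am)$.

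With this identification, Koszul grades match: $\kgr_{A}(\ka, J) = \kgr_{A}(\ka, M)$ for every ideal $\ka$ of $A$. Substituting into Theorem \ref{jzirenil}, the amalgamated algebra $A\bowtie^{f}J \cong A\ltimes M$ is Cohen-Macaulay in the sense of ideals if and only if $A$ is Cohen-Macaulay in the sense of ideals and $\kgr_{A}(\ka, M)\ge \height\ka$ for every ideal $\ka$ of $A$, which is precisely the claim.

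I do not foresee any genuine obstacle in this argument since it is almost purely a translation; the only minor point that needs care is checking that the isomorphism $A\bowtie^{f}J\cong A\ltimes M$ is compatible with the $A$-module structures used to define $\kgr_A(\ka,J)$, but this is immediate from the explicit formula for multiplication in the idealization.
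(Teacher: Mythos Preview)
Your proposal is correct and follows essentially the same approach as the paper: realize $A\ltimes M$ as $A\bowtie^{f}J$ with $B=A\ltimes M$, $J=0\ltimes M$, $f$ the natural embedding, note that $(0\ltimes M)^{2}=0$ so $J\subseteq\nil(B)$, and invoke Theorem~\ref{jzirenil}. Your added verification that the $A$-module structure on $J$ via $f$ coincides with that on $M$ (so that $\kgr_A(\ka,J)=\kgr_A(\ka,M)$) is a welcome bit of care that the paper leaves implicit.
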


Assume that $A$ is Noetherian. In \cite[Corollary 2.7]{SSS}, the
authors showed that $A$ is Cohen-Macaulay if $A\bowtie^f J$ is
Cohen-Macaulay provided that $f^{-1}(\qq)\neq \mm$ for each
$\qq\in\spec(B)\backslash V(J)$ and each $\mm\in\Max(A)$. In the
following corollary we improve the conclusion of the mentioned
result in the circumstance that $J\subseteq \nil(B)$.

Assume that $A$ is Noetherian and $M$ is a finitely generated
$A$-module. It can be seen that
$\height\ka\leq\grade_A(\ka,M)(=\kgr_A(\ka,M))$ for every ideal
$\ka$ of $A$ if and only if $M_{\pp}$ is maximal Cohen-Macaulay for
every prime ideal $\pp\in\Supp_A(M)$. Indeed, assume that $M_{\pp}$
is maximal Cohen-Macaulay for every prime ideal $\pp\in\Supp_A(M)$,
and $\ka$ is an ideal of $R$. There is nothing to prove if $\ka
M=M$, since in this case $\grade_A(\ka,M)=\infty$. So assume that
$\ka M\neq M$. Then using \cite[Proposition 1.2.10(a)]{BH98}, there
is a prime ideal $\pp$ containing $\ka$ such that
$\grade_A(\ka,M)=\depth M_{\pp}$. Hence by assumption one has
$\grade_A(\ka,M)=\depth M_{\pp}=\dim R_{\pp}=\height \pp\geq \height
\ka$. To prove the converse assume that $\pp\in\Supp_A(M)$. Then
again in view of \cite[Proposition 1.2.10(a)]{BH98}, one has $\dim
R_{\pp}=\height \pp\leq\grade_A(\pp,M)\leq\depth M_{\pp}$. Thus
$M_{\pp}$ is maximal Cohen-Macaulay.

\begin{cor}
Assume that $A$ is Noetherian, and that $J\subseteq \nil B$ is
finitely generated as an $A$-module. Then $A\bowtie^f J$ is
Cohen-Macaulay if and only if $A$ is Cohen-Macaulay and $J_{\pp}$ is
maximal Cohen-Macaulay for every prime ideal $\pp\in\Supp_A(J)$.
\end{cor}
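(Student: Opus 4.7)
The plan is to derive this corollary as a direct application of Theorem~\ref{jzirenil}, combined with the standard equivalence between Cohen-Macaulayness and Cohen-Macaulayness in the sense of ideals in the Noetherian case, and the discussion immediately preceding the statement.

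First, I would observe that $A\bowtie^f J$ is itself a Noetherian ring in this setting. Indeed, by \cite[Lemma 2.3(4)]{DFF}, $A\bowtie^f J\cong A\oplus J$ as an $A$-module, and hence is finitely generated over the Noetherian ring $A$. By the Eakin--Nagata theorem, $A\bowtie^f J$ is therefore Noetherian. In view of \cite[Corollary 2.1.4]{BH98}, this allows me to interchange ``Cohen-Macaulay'' with ``Cohen-Macaulay in the sense of ideals'' both for $A$ and for $A\bowtie^f J$, so the classical hypothesis on $A$ matches the hypothesis of Theorem~\ref{jzirenil}.

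Since $J\subseteq \nil(B)$, Theorem~\ref{jzirenil} applies and yields that $A\bowtie^f J$ is Cohen-Macaulay if and only if $A$ is Cohen-Macaulay and $\kgr_A(\ka,J)\ge \height\ka$ for every ideal $\ka$ of $A$. To finish, I would invoke the observation made in the paragraph preceding the corollary: for a finitely generated module $M$ over a Noetherian ring $A$, the condition $\height\ka\leq\kgr_A(\ka,M)$ for every ideal $\ka$ of $A$ is equivalent to $M_\pp$ being maximal Cohen-Macaulay for every $\pp\in\Supp_A(M)$. Applying this with $M=J$ rewrites the grade condition as the desired statement about localizations of $J$, completing the proof. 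The only real point to verify is the Noetherianity of $A\bowtie^f J$, so there is essentially no serious obstacle beyond that.
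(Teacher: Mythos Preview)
Your proposal is correct and follows precisely the route the paper intends: the corollary is stated without proof because it is meant to follow immediately from Theorem~\ref{jzirenil} together with the equivalence established in the paragraph preceding the statement, and that is exactly what you do. Your explicit verification that $A\bowtie^f J$ is Noetherian (via Eakin--Nagata, since $A\bowtie^f J\cong A\oplus J$ is a finitely generated $A$-module) is a point the paper leaves implicit but which is indeed needed to pass between the classical Cohen-Macaulay notion and Cohen-Macaulayness in the sense of ideals.
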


The next proposition provides other sufficient and necessary
condition for $A\bowtie^f J$ to be Cohen-Macaulay in the sense of
ideals.

\begin{prop}\label{int}
With the notation and hypotheses of the beginning of Section 3, one
has
\begin{enumerate}
  \item Let $\mathcal{A}$ be a non-empty class of ideals of $A$. Assume that $\height f^{-1}(\qq)\leq\height\qq$ for every
  $\qq\in\spec(B)\backslash V(J)$. If $A\bowtie^f J$ is Cohen-Macaulay (ring) in the sense of $\mathcal{A}^e:=\{\ka^e|\ka\in\mathcal{A}\}$, then $A$ is  Cohen-Macaulay in the sense of $\mathcal{A}$ and $\kgr_A
(\ka,J)\ge \height \ka$ for every $\ka\in\mathcal{A}$.
  \item Assume that $\height \mathcal{P}\leq\height \mathcal{P}^c$ for
  every $\mathcal{P}\in\spec(A\bowtie^f J)$, where the contraction $\mathcal{P}^c$
  is given with respect to $\iota_A$. If $A$ is  Cohen-Macaulay in the sense of ideals and $\kgr_A
(\ka,J)\ge \height \ka$ for every ideal $\ka$ of $A$, then
$A\bowtie^f J$ is Cohen-Macaulay (ring) in the sense of ideals.
\end{enumerate}
\end{prop}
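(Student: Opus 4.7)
The plan is to split along the two parts, reducing (i) to Proposition \ref{j} and adapting the proof of Theorem \ref{jzirenil} for (ii).

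For (i), in view of Proposition \ref{j}, it suffices to verify the height inequality $\height\ka^e\ge\height\ka$ for every $\ka\in\mathcal{A}$. By \cite[Corollary 2.5]{DFF1} together with Lemma \ref{e}, the primes of $A\bowtie^f J$ containing $\ka^e$ are exactly those of the form $\pp^{\prime_f}$ with $\pp\in\spec(A)$ and $\pp\supseteq\ka$, or $\overline{\qq}^f$ with $\qq\in\spec(B)\setminus V(J)$ and $f(\ka)\subseteq\qq$. I first observe that $\height\pp^{\prime_f}\ge\height\pp$: the assignment $\pp'\mapsto\pp'^{\prime_f}$ is order-preserving and injective on $\spec(A)$, so any chain $\pp_0\subsetneq\cdots\subsetneq\pp$ lifts to a chain in $\spec(A\bowtie^f J)$ ending at $\pp^{\prime_f}$. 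Next, $\height\overline{\qq}^f\ge\height\qq$: any prime $\qq'\subseteq\qq$ automatically satisfies $J\not\subseteq\qq'$ (else $J\subseteq\qq$), so a chain in $\spec(B)$ ending at $\qq$ stays inside $\spec(B)\setminus V(J)$, on which $\qq'\mapsto\overline{\qq'}^f$ is an order-preserving injection. Combining these with the hypothesis $\height f^{-1}(\qq)\le\height\qq$, for any prime $\mathcal{P}\supseteq\ka^e$ one obtains $\height\mathcal{P}\ge\height\ka$, via $\height\ka\le\height\pp\le\height\pp^{\prime_f}$ in the first case and $\height\ka\le\height f^{-1}(\qq)\le\height\qq\le\height\overline{\qq}^f$ in the second. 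Hence $\height\ka^e\ge\height\ka$, and Proposition \ref{j} delivers the conclusion.

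For (ii), the plan is to mimic the proof of Theorem \ref{jzirenil}, replacing the role of $J\subseteq\nil(B)$ (which there eliminated $\overline{\qq}^f$-type primes) by the new height bound $\height\mathcal{P}\le\height\mathcal{P}^c$. Given a proper ideal $I$ of $A\bowtie^f J$, \cite[Theorem 16 of Chapter 5]{N} supplies a prime $\mathcal{P}\supseteq I$ with $\kgr_{A\bowtie^f J}(I,A\bowtie^f J)=\kgr_{A\bowtie^f J}(\mathcal{P},A\bowtie^f J)$. Setting $\pp:=\mathcal{P}^c\in\spec(A)$, the inclusion $\iota_A(\pp)\subseteq\mathcal{P}$ gives $\pp^e\subseteq\mathcal{P}$, so Lemma \ref{kgr}, applicable since $A$ is Cohen--Macaulay in the sense of ideals and $\kgr_A(\pp,J)\ge\height\pp$, yields
\[\kgr_{A\bowtie^f J}(\mathcal{P},A\bowtie^f J)\ge\kgr_{A\bowtie^f J}(\pp^e,A\bowtie^f J)=\kgr_A(\pp,A)=\height\pp.\]
The height hypothesis then produces the chain
\[\height I\ge\kgr_{A\bowtie^f J}(I,A\bowtie^f J)\ge\height\pp\ge\height\mathcal{P}\ge\height I,\]
forcing equality throughout. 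The leftmost inequality is Lemma \ref{tamime 3.2}(ii); the requisite finiteness of Koszul grade at minimal primes over $I$ is handled exactly as in the proof of Theorem \ref{jzirenil}.

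The main obstacle lies in (i): one must correctly track heights for the two disjoint families of primes in $\spec(A\bowtie^f J)$. The decisive point is that $\pp^{\prime_f}$ always contains $\{0\}\times J$, while $\overline{\qq}^f$ with $\qq\notin V(J)$ does not. This cleanly separates the two families and guarantees that chains within each family faithfully reflect chains in $\spec(A)$ and in $\spec(B)\setminus V(J)$ respectively, which is what makes the height comparisons go through.
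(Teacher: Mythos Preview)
Your proof is correct and follows the same overall strategy as the paper: for (i) you reduce to Proposition~\ref{j} by verifying $\height\ka^e\ge\height\ka$, and for (ii) you run the same chain of inequalities through $\mathcal{P}^{ce}\subseteq\mathcal{P}$, Lemma~\ref{kgr}, and Lemma~\ref{tamime 3.2}.

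The one noteworthy difference is in the execution of (i). The paper establishes the height inequality by invoking the localization isomorphisms of \cite[Proposition~2.9]{DFF1} (and \cite[Proposition~4.1]{DFF2}), which give the exact values $\height\pp^{\prime_f}$ and $\height\overline{\qq}^f$ in terms of $\dim A_\pp$ and $\dim B_\qq$; this requires splitting the $\pp^{\prime_f}$ case according to whether $f^{-1}(J)\subseteq\pp$. Your chain-lifting argument---using that $\pp\mapsto\pp^{\prime_f}$ and $\qq\mapsto\overline{\qq}^f$ are strictly order-preserving on their respective domains---is more elementary and avoids the case split as well as the external localization results, at the cost of yielding only the inequality (which is all that is needed). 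For (ii), the paper reduces first to prime ideals via \cite[Theorem~3.3]{AT}, whereas you handle an arbitrary ideal $I$ directly using \cite[Theorem~16 of Chapter~5]{N}; the resulting chains are otherwise identical.
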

\begin{proof}
(1) Assume that $A\bowtie^f J$ is Cohen-Macaulay ring in the sense
of $\mathcal{A}^e$. In order to prove the assertion, by Proposition
\ref{j}, it is enough for us to show that $\height\ka^e \ge \height
\ka$ for each ideal $\ka\in\mathcal{A}$. To this end, assume that
$\ka\in\mathcal{A}$ and that $\mathcal{P}$ is a prime ideal of
$A\bowtie^f J$ containing $\ka^e$. In view of \cite[Corollaries 2.5
and 2.7]{DFF1}, one has the following three cases to consider.

\textbf{Case 1.} If $\Pc=\pp^{\prime_f}$ for some prime ideal $\pp$
of $A$ such that $f^{-1}(J)\nsubseteq\pp$, then
$$
\height\mathcal{P}=\height\pp^{\prime_f}=\dim(A\bowtie^f
J)_{\pp^{\prime_f}}=\dim A_{\pp}=\height\pp\geq\height\ka,
$$
by \cite[Proposition 2.9]{DFF1} and Lemma \ref{e}(1).

\textbf{Case 2.} If $\Pc=\pp^{\prime_f}$ for some prime ideal $\pp$
of $A$ such that $f^{-1}(J)\subseteq\pp$, then
\begin{align*}
    \height \mathcal{P}
    &= \height\pp^{\prime_f}\\
    &=\dim(A\bowtie^f J)_{\pp^{\prime_f}}\\
    &=\dim(A_{\pp}\bowtie^{f_{\pp}} J_{S_{\pp}})\\
    &=\max\{\dim A_{\pp},\dim  (f_\pp(A_\pp)+J_{S_\pp})\}\\
    &\geq \dim A_{\pp}\\
    &= \height \pp\\
    &\geq\height \ka,
\end{align*}
by \cite[Proposition 2.9]{DFF1}, \cite[Proposition 4.1]{DFF2} and
Lemma \ref{e}(1), where $S_\pp:=f(A\backslash\pp)+J$.

\textbf{Case 3.} If $\Pc=\bar{\qq}^f$ for some prime ideal $\qq$ of
$B$, then
\begin{align*}
    \height \mathcal{P}
    &=\height \bar{\qq}^f\\
    &= \dim(A\bowtie^fJ)_{\bar{\qq}^f}\\
    &=\dim B_{\qq}\\
    &=\height {\qq}\\
    &\geq \height f^{-1}(\qq)\\
    &\geq \height \ka.
\end{align*}
The third equality follows by \cite[Proposition 2.9]{DFF1}, the
first inequality holds by assumption, and the second one follows by
Lemma \ref{e}. This completes the proof of the first assertion.

(2) Assume that $A$ is  Cohen-Macaulay in the sense of ideals and
that $\kgr_A(\ka,J)\ge \height \ka$ for every ideal $\ka$ of $A$. As
indicated by \cite[Theorem 3.3]{AT}, it is enough to show that
$$\kgr_{A\bowtie^f J}(\Pc,A\bowtie^f J)= \height \Pc$$
for every prime ideal $\Pc$ of $A\bowtie^f J$. Let $\Pc$ be a prime
ideal of $A\bowtie^f J$. Then
\begin{align*}
    \height \Pc
    &\leq \height\Pc^c\\
    &=\kgr_A(\Pc^c,A)\\
    &=\kgr_{A\bowtie^f J}(\Pc^{ce},A\bowtie^f J)\\
    &\le \kgr_{A\bowtie^f J}(\Pc,A\bowtie^f J)\\
    &\le \height \Pc.
\end{align*}
The first inequality holds by assumption, the second inequality is
by \cite[Proposition 9.1.2(f)]{BH98}, and the last one is by Lemma
\ref{tamime 3.2}(2), and the second equality follows from Lemma
\ref{kgr}.
\end{proof}

We are now in a position to present our third main result.

\begin{thm}\label{gd}
With the notation and hypotheses of the beginning of Section 3, the
following statements hold:
\begin{enumerate}
  \item Let $\mathcal{A}$ be a non-empty class of ideals of $A$. Assume that the homomorphism $f:A\to B$ satisfies the going-down
property. If $A\bowtie^f J$ is Cohen-Macaulay (ring) in the sense of
$\mathcal{A}^e:=\{\ka^e|\ka\in\mathcal{A}\}$, then $A$ is
Cohen-Macaulay in the sense of $\mathcal{A}$ and $\kgr_A (\ka,J)\ge
\height \ka$ for every $\ka\in\mathcal{A}$.
  \item Assume that $\iota _A: A\to A\bowtie^f J$ is an integral ring
extension. If $A$ is  Cohen-Macaulay in the sense of ideals and
$\kgr_A (\ka,J)\ge \height \ka$ for every ideal $\ka$ of $A$, then
$A\bowtie^f J$ is Cohen-Macaulay (ring) in the sense of ideals.
\end{enumerate}
\end{thm}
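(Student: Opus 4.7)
The plan is to deduce both parts of Theorem \ref{gd} from Proposition \ref{int} by verifying that the two height hypotheses appearing there are in fact consequences of the present assumptions, namely going-down for part (1) and integrality for part (2).

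For (1), my first step is to check that the going-down property for $f:A\to B$ forces $\height f^{-1}(\qq)\le \height \qq$ for every $\qq\in\spec(B)$, and in particular for every $\qq\in\spec(B)\setminus V(J)$. The argument is the standard one: starting from any strictly ascending chain $\pp_0\subsetneq\cdots\subsetneq \pp_n=f^{-1}(\qq)$ in $\spec(A)$, repeated applications of going-down produce a chain $\qq_0\subsetneq\cdots\subsetneq \qq_n=\qq$ in $\spec(B)$ lying over it. With this inequality in hand, the hypothesis of Proposition \ref{int}(1) is met and that proposition immediately yields the conclusion of (1).

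For (2), the strategy is symmetric. I would first establish that the integrality of $\iota_A:A\to A\bowtie^f J$ forces $\height \Pc\le \height \Pc^c$ for every prime $\Pc$ of $A\bowtie^f J$. This is the classical consequence of the Cohen--Seidenberg incomparability theorem: given a strictly ascending chain $\Pc_0\subsetneq\cdots\subsetneq \Pc_n=\Pc$ in $\spec(A\bowtie^f J)$, incomparability rules out two consecutive terms contracting to the same prime of $A$, so the contracted chain $\Pc_0^c\subsetneq\cdots\subsetneq \Pc_n^c$ is again strictly increasing and witnesses $\height \Pc^c\ge n$. Once this is in place, Proposition \ref{int}(2) delivers the conclusion of (2).

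The real technical work for this theorem has already been carried out inside Proposition \ref{int}; here the only task is to translate the classical hypotheses (going-down and integrality) into the height inequalities assumed in that proposition. The main obstacle, modest though it is, is to ensure that the chain-of-primes arguments above go through without any hidden Noetherian or finite-generation assumption. Since both the going-down argument and the incomparability argument are valid for arbitrary commutative rings, no genuine difficulty is expected.
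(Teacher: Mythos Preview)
Your proposal is correct and follows essentially the same route as the paper: both parts are deduced from Proposition~\ref{int} after noting that going-down gives $\height f^{-1}(\qq)\le\height\qq$ and integrality gives $\height\Pc\le\height\Pc^{c}$. The paper simply cites \cite[Exercises 9.8 and 9.9]{M} for these two standard facts, whereas you sketch the chain-of-primes and incomparability arguments directly.
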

\begin{proof}
It is well-known that $\height f^{-1}(\qq)\leq\height\qq$ for every
$\qq\in\spec(B)$ if the homomorphism $f:A\to B$ satisfies the
going-down property by \cite[Exercise 9.9]{M}. In the light of
Proposition \ref{int}, this proves (1). To prove (2), keeping in
mind Proposition \ref{int}, notice that, for every
$\mathcal{P}\in\spec(A\bowtie^f J)$, the inequality $\height
\mathcal{P}\leq\height \mathcal{P}^c$ holds since $\iota _A: A\to
A\bowtie^f J$ is an integral ring extension \cite[Exercise 9.8]{M},
where the contraction $\mathcal{P}^c$ is given with respect to
$\iota_A$.
\end{proof}

Note that Example \ref{ex} also shows that we can not neglect the
integral assumption in part two of the above theorem.

\begin{ex}
\begin{enumerate}
\item Assume that $A$ is an integral domain with $\dim A\leq1$
and that $B$ is an integral domain containing $A$. Assume that $J$
is an ideal of $B$ which is finitely generated $A$-module. Hence, as
in Example \ref{ex}, one has $\kgr_A(\ka,J)=\height \ka$ for every
proper ideal $\ka$ of $A$. Notice that $A$ is Cohen-Macaulay in the
sense
    of ideals by \cite[Page 2305]{AT}.
    Therefore one obtains that $A\bowtie^f J$ is Cohen-Macaulay
    in the sense of ideals by Theorem \ref{gd}.
\item To construct a concrete example for (1), set $A:=\mathbb{Q}+X\mathbb{R}[X]$,
where $\mathbb{Q}$ is the field of rational numbers, $\mathbb{R}$ is
the field of real numbers and $X$ is an indeterminate over
$\mathbb{R}$. It is easy to see that $A$ is a one dimensional non
integrally closed domain. Put $B:=A[\sqrt{2}]$, which is finitely
generated as an $A$-module. Let $J$ be a finitely generated ideal of
$B$. Consequently, by (1), $A\bowtie^f J$ is Cohen-Macaulay in the
sense of ideals.
\item Assume that $A$ is a valuation domain, $B$ an arbitrary
integral domain containing $A$ and that $J$ is an ideal of $B$. Then
by \cite[Corollary 4]{D1} and \cite[Theorem 1]{D2}, the inclusion
homomorphism $f:A\hookrightarrow B$ satisfies the going-down
property. Also notice, by \cite[Proposition 3.12]{AT}, that $A$ is
Cohen-Macaulay in the sense of ideals if and only if $\dim A\leq1$.
Further, assume that $\dim A>1$. Then $A\bowtie^f J$ can never be
Cohen-Macaulay in the sense of ideals by Theorem \ref{gd}. In
particular, the composite ring extensions $A+XB[X]$ and $A+XB[[X]]$
can never be Cohen-Macaulay in the sense of ideals.
\end{enumerate}
\end{ex}

Note that if $J$ is finitely generated as an $A$-module, then $\iota
_A: A\to A\bowtie^f J$ is an integral ring extension, and that, in
this case, $\kgr_A(\ka, J)\le \height\ka$ for every ideal $\ka$ of
$A$ by Lemma \ref{tamime 3.2}. Hence we can make the following
corollaries right away.

\begin{cor}
Assume that the homomorphism $f:A\to B$ satisfies the going-down
property and that $J$ is finitely generated as an $A$-module. Then
$A\bowtie^f J$ is Cohen-Macaulay (ring) in the sense of ideals if
and only if $A$ is  Cohen-Macaulay in the sense of ideals and
$\kgr_A (\ka,J)= \height \ka$ for every ideal $\ka$ of $A$.
\end{cor}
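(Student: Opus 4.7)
The plan is to derive this corollary directly from Theorem \ref{gd} together with the observation (made in the paragraph immediately preceding the corollary) that the finite generation of $J$ as an $A$-module forces both the integrality of $\iota_A$ and the inequality $\kgr_A(\ka,J)\leq \height\ka$ for every ideal $\ka$ of $A$ (via Lemma \ref{tamime 3.2}(2)). The equality $\kgr_A(\ka,J)=\height\ka$ in the statement is therefore really just the inequality $\kgr_A(\ka,J)\geq\height\ka$ combined with the automatic reverse inequality; this is the little conceptual glue that reduces the equivalence to a clean application of the theorem.

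For the ``if'' direction, I would assume $A$ is Cohen-Macaulay in the sense of ideals and $\kgr_A(\ka,J)=\height\ka$ for every ideal $\ka$. Since $J$ is finitely generated over $A$, the extension $\iota_A:A\to A\bowtie^fJ$ is integral (as recalled right before the corollary). So the hypotheses of Theorem \ref{gd}(2) are met, and $A\bowtie^fJ$ is Cohen-Macaulay in the sense of ideals.

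For the ``only if'' direction, I would assume $A\bowtie^fJ$ is Cohen-Macaulay in the sense of ideals. Let $\mathcal{A}$ denote the class of all ideals of $A$; then in particular $A\bowtie^fJ$ is Cohen-Macaulay in the sense of $\mathcal{A}^e$, and by hypothesis $f:A\to B$ satisfies the going-down property, so Theorem \ref{gd}(1) applies and yields that $A$ is Cohen-Macaulay in the sense of ideals and $\kgr_A(\ka,J)\geq \height\ka$ for every ideal $\ka$. Finally, the finite generation of $J$ and Lemma \ref{tamime 3.2}(2) supply the matching inequality $\kgr_A(\ka,J)\leq\height\ka$, giving the claimed equality.

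Since every step is a direct invocation of results already proved, I do not anticipate any real obstacle; the only thing to be careful about is making the $\mathcal{A}^e$/all-ideals bookkeeping explicit when quoting Theorem \ref{gd}(1), and remembering that the $=$ in the statement is shorthand for ``$\geq$ plus the automatic $\leq$''.
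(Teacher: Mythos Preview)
Your proposal is correct and matches the paper's approach exactly: the paper states this corollary without proof, deriving it ``right away'' from Theorem~\ref{gd} together with the paragraph just preceding it, which records precisely the two consequences of finite generation of $J$ (integrality of $\iota_A$ and the inequality $\kgr_A(\ka,J)\le\height\ka$ from Lemma~\ref{tamime 3.2}(2)) that you invoke. Your bookkeeping on the $\mathcal{A}^e$ point is also handled correctly.
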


\begin{cor}\label{int1}
Assume that $f:A\to B$ is a monomorphism of integral domains, and
$A$ is integrally closed and that $B$ is integral over $A$. Then
$A\bowtie^f J$ is Cohen-Macaulay (ring) in the sense of ideals if
and only if $A$ is Cohen-Macaulay in the sense of ideals and $\kgr_A
(\ka,J)\ge \height \ka$ for every ideal $\ka$ of $A$.
\end{cor}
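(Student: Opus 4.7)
The plan is to deduce Corollary \ref{int1} from Theorem \ref{gd} by verifying each of its two structural hypotheses in the present setting.

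For the direction $(\Rightarrow)$, the aim is to apply Theorem \ref{gd}(1) with $\mathcal{A}$ taken to be the class of all ideals of $A$. The only thing to check is that $f$ satisfies the going-down property, and this is exactly the content of the classical Krull--Cohen--Seidenberg theorem (see e.g.\ \cite[Theorem 9.4]{M}): for any integral extension of integral domains in which the base ring is integrally closed, going-down holds. Since $f$ is injective we identify $A$ with $f(A) \subseteq B$, so the hypothesis applies verbatim.

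For the direction $(\Leftarrow)$, the plan is to invoke Theorem \ref{gd}(2), for which it remains to show that $\iota_A : A \to A\bowtie^f J$ is an integral ring extension. Because $A\bowtie^f J$ is a subring of $A \times B$, it suffices to prove that $A \times B$ is integral over $\iota_A(A) = \{(a,f(a)) : a \in A\}$. Given $(a', b) \in A \times B$, the hypothesis that $B$ is integral over $f(A)$ yields an equation $b^n + f(c_{n-1})b^{n-1} + \cdots + f(c_0) = 0$ with $c_i \in A$; I would then form the monic polynomial
\[
g(T) := (T - a')\bigl(T^n + c_{n-1}T^{n-1} + \cdots + c_0\bigr) \in A[T].
\]
Lifting the coefficients of $g$ along $\iota_A$ and evaluating at $(a', b)$ in $A \times B$ annihilates the first coordinate via the factor $T - a'$ and annihilates the second coordinate via the integral equation for $b$, so $(a', b)$ is integral over $\iota_A(A)$.

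The only subtlety to anticipate is that the remark immediately preceding the corollary supplies integrality of $\iota_A$ under the hypothesis that $J$ is finitely generated as an $A$-module, which is \emph{not} assumed here; the product-polynomial trick above sidesteps this difficulty by realizing integrality coordinate-wise in the ambient product $A \times B$ rather than searching for a single integral equation for an element $(a, f(a)+j) \in A\bowtie^f J$ directly. With both hypotheses of Theorem \ref{gd} thus established, the corollary follows.
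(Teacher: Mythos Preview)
Your proposal is correct and follows the same overall route as the paper: both directions are deduced from Theorem~\ref{gd} after checking that $f$ has going-down (via \cite[Theorem 9.4]{M}) and that $\iota_A$ is integral. The only difference is cosmetic: where you supply a direct coordinate-wise integrality argument in $A\times B$, the paper simply cites \cite[Lemma 3.6]{DFF2} for the integrality of $\iota_A$; your product-polynomial computation is a valid hands-on substitute for that citation.
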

\begin{proof}
By \cite[Theorem 9.4]{M}, $f:A\to B$ satisfies the going-down
property. Also, $\iota _A: A\to A\bowtie^f J$ is an integral ring
extension by assumption and \cite[Lemma 3.6]{DFF2}.
\end{proof}

\begin{cor}\label{int2}
Assume that $f:A\to B$ is a flat and integral homomorphism. Then
$A\bowtie^f J$ is Cohen-Macaulay (ring) in the sense of ideals if
and only if $A$ is Cohen-Macaulay in the sense of ideals and
$\kgr_A(\ka,J)\ge \height \ka$ for every ideal $\ka$ of $A$.
\end{cor}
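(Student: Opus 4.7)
The plan is to deduce this corollary directly from Theorem \ref{gd} by verifying its two hypotheses separately: going-down for $f$ and integrality of $\iota_A$.

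For the forward direction, I would invoke Theorem \ref{gd}(1), which requires that $f:A\to B$ satisfy the going-down property. This is immediate from the flatness hypothesis: a flat homomorphism of commutative rings always satisfies going-down (this is the classical result of Matsumura, e.g.\ \cite[Theorem 9.5]{M}). Hence if $A\bowtie^f J$ is Cohen-Macaulay in the sense of ideals, then in particular it is Cohen-Macaulay in the sense of $\mathcal{A}^e$ where $\mathcal{A}$ is the class of all ideals of $A$ (using that extension commutes with the class), and Theorem \ref{gd}(1) yields both that $A$ is Cohen-Macaulay in the sense of ideals and that $\kgr_A(\ka,J)\ge \height\ka$ for every ideal $\ka$ of $A$.

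For the converse direction, I would apply Theorem \ref{gd}(2), whose hypothesis is that $\iota_A: A\to A\bowtie^f J$ is an integral ring extension. This is where the integrality of $f$ enters: by \cite[Lemma 3.6]{DFF2} (the same lemma invoked in the proof of Corollary \ref{int1}), if $f:A\to B$ is integral, then $\iota_A: A\to A\bowtie^f J$ is integral. Combined with the assumed Cohen-Macaulayness of $A$ in the sense of ideals and the Koszul grade inequality, Theorem \ref{gd}(2) gives that $A\bowtie^f J$ is Cohen-Macaulay in the sense of ideals.

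The main obstacle, if any, is really just bookkeeping: checking that a flat homomorphism is going-down and that integrality of $f$ transfers to integrality of $\iota_A$. Both are standard and have been cited in the paper already (the integrality transfer in Corollary \ref{int1}, and going-down is classical). So the proof is essentially a two-line application of Theorem \ref{gd}, exactly parallel to Corollary \ref{int1}, with \textbf{flat $+$ integral} replacing \textbf{monomorphism of integrally closed domain into an integral extension}: flatness plays the role that Theorem 9.4 of Matsumura played for the integrally closed case, while integrality of $f$ directly yields the integrality of $\iota_A$ via \cite[Lemma 3.6]{DFF2}.
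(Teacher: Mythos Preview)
Your proposal is correct and follows essentially the same approach as the paper's own proof: the paper also cites \cite[Theorem 9.5]{M} to obtain going-down from flatness and \cite[Lemma 3.6]{DFF2} to obtain integrality of $\iota_A$ from integrality of $f$, then appeals (implicitly) to Theorem \ref{gd}. Your write-up is simply a more detailed version of the same two-line argument.
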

\begin{proof}
By \cite[Theorem 9.5]{M}, $f:A\to B$ satisfies the going-down
property. Also, $\iota _A: A\to A\bowtie^f J$ is an integral ring
extension by assumption and \cite[Lemma 3.6]{DFF2}.
\end{proof}

In concluding, we apply Corollary \ref{int2} on amalgamated
duplication. Recall that if $f:=id_A$ is the identity homomorphism
on $A$, and $J$ is an ideal of $A$, then $A\bowtie
J:=A\bowtie^{id_A} J$ is called the amalgamated duplication of $A$
along $J$. Assume that $(A,\mm)$ is Noetherian local. In
\cite[Discussion 10]{D}, assuming that $A$ is Cohen-Macaulay, D'Anna
showed that $A\bowtie J$ is Cohen-Macaulay if and only if $J$ is
maximal Cohen-Macaulay. Next in \cite[Corollary 2.7]{SSh}, the
authors improved D'Anna's result as $A\bowtie J$ is Cohen-Macaulay
if and only if $A$ is Cohen-Macaulay and $J$ is maximal
Cohen-Macaulay. Our final corollary generalizes these results.

\begin{cor}\label{d}
Let $J$ be an ideal of $A$. Then $A\bowtie J$ is Cohen-Macaulay
(ring) in the sense of ideals if and only if $A$ is Cohen-Macaulay
in the sense of ideals and $\kgr_A(\ka,J)\ge \height \ka$ for every
ideal $\ka$ of $A$.
\end{cor}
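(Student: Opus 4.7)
The plan is to deduce this corollary directly from Corollary \ref{int2}. In the amalgamated duplication setting, $B = A$ and $f = \mathrm{id}_A$, so I need only verify that the identity homomorphism $\mathrm{id}_A : A \to A$ is both flat and integral. Flatness is immediate since $A$ is flat as a module over itself, and integrality is trivial because every $a \in A$ satisfies its own monic equation $X - a = 0$ over $A$. With these two hypotheses in hand, Corollary \ref{int2} applies verbatim and yields the stated equivalence.

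Concretely, I would open the proof by observing that $A \bowtie J = A \bowtie^{\mathrm{id}_A} J$ by definition, then note the two trivial verifications above, and finally invoke Corollary \ref{int2} to conclude. There is essentially no obstacle: this is a one-line application of the already-proved Corollary \ref{int2}, and the purpose of stating it separately is to make explicit the improvement over the results of D'Anna \cite{D} and of \cite{SSh} which were previously available only in the Noetherian local setting under the maximal Cohen-Macaulay hypothesis on $J$. The only care needed is to phrase the deduction so that the reader sees immediately why the hypotheses of Corollary \ref{int2} are satisfied in the duplication case.
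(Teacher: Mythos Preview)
Your proposal is correct and matches the paper's own proof essentially verbatim: the paper also deduces the result immediately from Corollary~\ref{int2} by noting that $f=\mathrm{id}_A:A\to A$ is flat and integral.
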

\begin{proof}
This immediately follows from Corollary \ref{int2}, since
$f=id_A:A\to A$ is flat and integral.
\end{proof}

 {\bf Acknowledgements.}
The authors is deeply grateful to the referee for a very careful
reading of the manuscript and many valuable suggestions.

\end{document}